\newtheorem{theorem}{Theorem}[section]
\newtheorem{lemma}[theorem]{Lemma}
\newtheorem{question}[theorem]{Question}
\newtheorem{proposition}[theorem]{Proposition}
\theoremstyle{definition}
\newtheorem{definition}[theorem]{Definition}
\newtheorem{remark}[theorem]{Remark}
\newtheorem*{acknowledgement}{Acknowledgements}
\title{Turaev--Viro invariants and profinite completions of surface bundles}
\author{Qirong Yang}
\date{}
\begin{document}
\maketitle

{\fontfamily{ptm}\selectfont
\begin{abstract}
We prove that the Turaev--Viro invariants of the two surface bundles over the circle coincide for every spherical fusion category if the surface group is procongruently conjugacy separable and there exists a regular profinite isomorphism between the fundamental groups. 
\end{abstract}

\section{Introduction}
The \emph{profinite completion} of group $G$ is the inverse limit of the collection of finite quotients of $G$, denoted by $\widehat{G}$. If a group $G$ is determined by its profinite completion, it is called \emph{profinitely rigid}. In 2018, Reid \cite{Reid2018} gave an ICM talk about main problems and recent progress related to profinite rigidity of 3-manifolds. An interesting topic is to determine the relationship between the profinite completion and other topological invariants. For example, Ueki \cite{Ueki2017} proved the Alexander polynomials are determined by the profinite completions of knot groups. Xu \cite{xuregularity} proved that the A-polynomials of prime knots are also determined by the profinite completions of knot groups. 

Note that the untwisted Dijkgraaf--Witten invariants are determined by the profinite completions of fundamental groups for 3-manifolds and the converse is proved by Funar \cite{Funar2013}. This leads to our first result as follows.

\begin{theorem}\label{main DW}
Let $M_1,M_2$ be closed connected 3-manifolds with finitely generated fundamental groups. They have the same untwisted Dijkgraaf--Witten invariants for every finite group if and only if their fundamental groups are profinitely isomorphic, i.e. $\widehat{\pi_1(M_1)}\cong \widehat{\pi_1(M_2)}$.
\end{theorem}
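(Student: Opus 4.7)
The plan is to identify the untwisted Dijkgraaf--Witten invariant for a finite group $G$ on a closed connected 3-manifold $M$ with the normalised representation count
\[
Z_G(M) = \frac{|\mathrm{Hom}(\pi_1(M),G)|}{|G|}.
\]
This formula comes from summing $1/|\mathrm{Aut}(\rho)|$ over isomorphism classes of flat $G$-bundles $\rho$ and applying orbit--stabiliser. Once this is in hand, the theorem reduces to the statement that $|\mathrm{Hom}(\pi_1(M_1),G)| = |\mathrm{Hom}(\pi_1(M_2),G)|$ for every finite $G$ if and only if $\widehat{\pi_1(M_1)} \cong \widehat{\pi_1(M_2)}$.

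For the ``if'' direction I would invoke the universal property of profinite completion: every homomorphism from $\pi_1(M)$ to a finite group $G$ factors uniquely through $\widehat{\pi_1(M)}$, so any profinite isomorphism transports representations bijectively and yields matching Hom counts, hence matching DW invariants.

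The ``only if'' direction is the real work. Starting from the decomposition
\[
|\mathrm{Hom}(\pi_1(M_i), G)| = \sum_{H \le G} |\mathrm{Epi}(\pi_1(M_i), H)|
\]
(each homomorphism is surjective onto its image), I would solve this triangular system by M\"obius inversion on the subgroup lattice of $G$ to recover $|\mathrm{Epi}(\pi_1(M_i), G)|$ from the given DW data for every finite $G$. Dividing by $|\mathrm{Aut}(G)|$ then converts these into the number of normal subgroups of $\pi_1(M_i)$ with quotient isomorphic to $G$, showing that $\pi_1(M_1)$ and $\pi_1(M_2)$ share the same multiset of finite quotients.

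The main obstacle is the final step: promoting equality of finite quotients with multiplicity to an isomorphism of profinite completions. For finitely generated groups this is supplied by the Dixon--Formanek--Poland--Ribes theorem, and Funar \cite{Funar2013} carried out the analogous reconstruction directly from the DW invariants in the 3-manifold setting. Appealing to either result finishes the argument, and the finite-generation hypothesis in the statement is exactly what makes this last step go through.
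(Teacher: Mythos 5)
Your proposal is correct and follows essentially the same route as the paper: identify the untwisted Dijkgraaf--Witten invariant with $|\mathrm{Hom}(\pi_1(M),G)|/|G|$, and then reduce to the equivalence (the paper's Proposition~3.1, obtained from Funar's result together with the standard fact that finitely generated groups have isomorphic profinite completions if and only if they have the same finite quotients) between matching Hom-counts for all finite groups and profinite isomorphism; your M\"obius-inversion sketch just makes explicit the step the paper delegates to Funar's citation. The one point worth flagging is that in the ``if'' direction the bijection $\mathrm{Hom}(\widehat{\pi},Q)\to\mathrm{Hom}(\pi,Q)$ requires knowing that abstract homomorphisms from a finitely generated profinite group to a finite group are automatically continuous --- the Nikolov--Segal theorem, which the paper invokes explicitly and which you elide under ``the universal property.''
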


Dijkgraaf-Witten invariants can be viewed as the special case of Turaev--Viro invariants. This rest of the paper focuses on the Turaev-Viro theory.

It is known that Turaev-Viro invariants in general are not determined by the profinite completions of fundamental groups. It is evident from the explicit values on all lens spaces in \cite{Sokolov1997}. Liu \cite{Liu2023} proved that there exists nontrivial Hempel pairs can be distinguished by Turaev-Viro invariants. However, his result shows there also exist some nontrivial Hempel pairs cannot be distinguished by Turaev-Viro invariants. Meanwhile, Funar \cite{Funar2013} proved that there exist non-homeomorphic but profinitely isomorphic torus bundles over the circle whose monodromies are Anosov.

In this paper, we establish a sufficient condition under which profinitely isomorphic surface bundles over the circle have the same Turaev-Viro invariants for every spherical fusion category. We describe our condition as follows. By Dehn-Nielsen-Baer theorem, we know that there is a natural isomorphism $\operatorname{Mod}(\Sigma)\stackrel{\cong}{\longrightarrow} \mathrm{Out}(\pi_1(\Sigma))$, where $\mathrm{Mod}(\Sigma)$ is the mapping class group of $\Sigma$ and $\mathrm{Out}(\pi_1(\Sigma))$ denotes the outer automorphism group of $\pi_1(\Sigma)$. Let $G$ be a finitely generated group. It is known that there is a canonical homomorphism $\mathrm{Out}(G)\rightarrow \mathrm{Out}(\widehat{G})$ and an inclusion $\mathrm{Out}(G)\rightarrow \widehat{\mathrm{Out}(G)}$. Then there is a homomorphism $\phi$ between two profinite groups :
\begin{displaymath}
\xymatrix{
 \operatorname{Out}(G)\ar[d]\ar[dr] &\\
\widehat{\operatorname{Out}(G)}\ar@{-->}[r]^{\phi} &\mathrm{Out}(\widehat{G})}
\end{displaymath}

\begin{definition}
Let $G$ be a finitely generated residually finite group. It is said to be \emph{procongruently conjugacy separable} if the following property holds: if $f,g\in \mathrm{Out}(G)$ are not conjuagte in $\widehat{\mathrm{Out}(G)}$, then they do not induce a conjugate pair of outer automorphisms in $\mathrm{Out}(\widehat{G})$. 
\end{definition}

Let $G,H$ be finitely generated residually finite groups. An isomorphism $\widehat{G}\rightarrow \widehat{H}$ is said to be \emph{regular} if its abelianization $\widehat{G^{ab}}\rightarrow \widehat{H^{ab}}$ is induced by an isomorphism $G^{ab}\rightarrow H^{ab}$, where $^{ab}$ denotes the abelianization. Now we state our main result:

\begin{theorem}\label{main thm}
Let $\Sigma$ be a closed, orientable, connected surface and let $f, g \in \operatorname{Mod}(\Sigma)$. If $\pi_1(\Sigma)$ is procongruently conjugacy separable and there exists a regular isomorphism $\Phi: \widehat{\pi_1(M_f)} \to \widehat{\pi_1(M_g)}$, then mapping tori $M_f$ and $M_g$ have the same Turaev-Viro invariants for every spherical fusion category.
\end{theorem}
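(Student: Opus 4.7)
The plan is to translate the hypotheses into a conjugacy between $f$ and $g$ inside $\widehat{\mathrm{Mod}(\Sigma)}$, and then to show that Turaev--Viro invariants of mapping tori are encoded by a class function on a finite quotient of the mapping class group.

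\textbf{Step 1: Reducing to conjugacy in $\widehat{\mathrm{Mod}(\Sigma)}$.} The mapping-torus presentation gives a short exact sequence $1\to \pi_1(\Sigma)\to \pi_1(M_f)\to \mathbb{Z}\to 1$, whose profinite completion $1\to \widehat{\pi_1(\Sigma)}\to \widehat{\pi_1(M_f)}\to \widehat{\mathbb{Z}}\to 1$ remains exact since surface groups are good. Regularity of $\Phi$ says that $\Phi^{\mathrm{ab}}$ is induced by a genuine isomorphism $H_1(M_f)\to H_1(M_g)$, so it matches the fiber class of $M_f$ with the fiber class of $M_g$ (up to sign, and after possibly composing $\Phi$ with an inner automorphism). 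Hence $\Phi$ restricts to an isomorphism $\psi:\widehat{\pi_1(\Sigma)}\to \widehat{\pi_1(\Sigma)}$, and commutativity of the resulting diagram forces $\psi\hat f\psi^{-1}=\hat g$ in $\mathrm{Out}(\widehat{\pi_1(\Sigma)})$. Via Dehn--Nielsen--Baer and the procongruent conjugacy separability of $\pi_1(\Sigma)$ (applied to its contrapositive), this lifts to a conjugacy of $f$ and $g$ in $\widehat{\mathrm{Out}(\pi_1(\Sigma))}=\widehat{\mathrm{Mod}(\Sigma)}$.

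\textbf{Step 2: From conjugacy in $\widehat{\mathrm{Mod}(\Sigma)}$ to equality of TV invariants.} For each spherical fusion category $\mathcal{C}$, the Turaev--Viro TQFT, identified via Turaev--Virelizier with the Reshetikhin--Turaev theory of the Drinfeld center $\mathcal{Z}(\mathcal{C})$, yields a projective representation $\rho_{\mathcal{C}}:\mathrm{Mod}(\Sigma)\to \mathrm{PGL}(V_{\mathcal{C}}(\Sigma))$ on a finite-dimensional state space, satisfying $TV_{\mathcal{C}}(M_f)=|\operatorname{tr}\rho_{\mathcal{C}}(f)|^2$; the right-hand side is well-defined because $|\operatorname{tr}|^2$ depends only on the projective class. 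The key claim I would invoke is that $\rho_{\mathcal{C}}$ factors through a finite quotient of $\mathrm{Mod}(\Sigma)$, so that $|\operatorname{tr}|^2$ descends to a class function on a finite group. Since $f$ and $g$ are conjugate in $\widehat{\mathrm{Mod}(\Sigma)}$ they are conjugate in every finite quotient, giving the desired equality.

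The main technical obstacle is the finite-image claim in step 2. For genus one this is the celebrated congruence subgroup theorem of Ng--Schauenburg, and in higher genus the corresponding statement for every spherical fusion category is subtler, likely requiring a congruence subgroup property for the Weil-type representations of $\mathrm{Mod}(\Sigma_g)$ or a substitute tailored to TV theory. An alternative route I would also pursue is to express the state-sum formula for $TV_{\mathcal{C}}(M_f)$ directly as a pairing involving $f_*$ acting on algebraic data attached to characteristic finite-index subgroups of $\pi_1(\Sigma)$; the conjugacy obtained in step 1 would then preserve each such term, potentially circumventing the need for a global finite-image theorem.
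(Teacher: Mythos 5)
Your Step 1 matches the paper's reduction: regularity forces the induced map on $\widehat{\mathbb{Z}}$ to be $\pm 1$, giving an aligned isomorphism, hence procongruent conjugacy of $f$ and $g$, and procongruent conjugacy separability then upgrades this to conjugacy in $\widehat{\mathrm{Out}(\pi_1(\Sigma))}\cong\widehat{\mathrm{Mod}(\Sigma)}$. That part is fine.

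Step 2 contains a genuine gap, and you have correctly identified it yourself: the claim that $\rho_{\mathcal{C}}$ factors through a finite quotient of $\mathrm{Mod}(\Sigma)$ for every spherical fusion category is not a known theorem in genus $\geq 2$ (it is essentially a congruence-subgroup/finite-image property for quantum representations of higher-genus mapping class groups, which is open in this generality, and for many modular categories the quantum representations are in fact known to have \emph{infinite} image). A proof that rests on this claim does not close. The point you are missing is that no global finite-image statement is needed. The paper's argument is: the TV TQFT gives an honest linear representation $\rho:\mathrm{Mod}(\Sigma)\to\mathrm{GL}(n,\bar R)$ with $\mathrm{TV}(M_f)=\operatorname{tr}\rho([f])$, where $\bar R$ is the subring of $\Bbbk$ generated by the finitely many entries of $\rho(x_i)^{\pm1}$ for a finite generating set $\{x_i\}$ of $\mathrm{Mod}(\Sigma)$; thus $\bar R$ is a finitely generated $\mathbb{Z}$-algebra. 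For each finite-index ideal $I\subset\bar R$ the reduction $\rho_I:\mathrm{Mod}(\Sigma)\to\mathrm{GL}(n,\bar R/I)$ has finite image automatically, hence factors through $\widehat{\mathrm{Mod}(\Sigma)}$, and conjugacy of $f,g$ there yields $\operatorname{tr}\rho([f])\equiv\operatorname{tr}\rho([g])\pmod I$ for every such $I$. One then needs the purely algebraic fact that two elements of a finitely generated $\mathbb{Z}$-algebra congruent modulo every finite-index ideal are equal; the paper proves this using Artin--Rees together with the fact that a finitely generated $\mathbb{Z}$-algebra which is a field is finite. This residual-finiteness-of-the-coefficient-ring trick is the essential idea your proposal lacks; your vaguely sketched ``alternative route'' via characteristic subgroups of $\pi_1(\Sigma)$ is not developed enough to substitute for it.

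A secondary issue: you pass through the Reshetikhin--Turaev theory of the Drinfeld center and work with a projective representation and $|\operatorname{tr}|^2$. The paper instead uses that the TV theory itself is an anomaly-free TQFT, so $\rho$ is genuinely linear and $\mathrm{TV}(M_f)$ is literally a trace; this matters because the congruence argument above is carried out on traces in $\bar R$, not on their absolute values squared.
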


We discuss the two hypothesis of Theorem \ref{main thm}. The conjecture that the regularity of $\Phi$ holds for all finite-volume hyperbolic 3-manifolds remains open in the closed case. Following Xu's proof of regularity for cusped manifolds \cite{xuregularity}, we exhibit a class of closed 3-manifolds  satisfying the regularity condition. 

It is known that there is a natrual epimorphism $$\varphi:\widehat{\mathrm{SL}(n,\mathbb{Z})}\rightarrow \mathrm{SL}(n,\widehat{\mathbb{Z}}).$$ 
Bass, Lazard and Serre \cite{bams} and Mennicke \cite{M} proved $\mathrm{ker}(\varphi)=1$, when $n\geq 3$. 
Thus, $\widehat{\mathrm{SL}(n,\mathbb{Z})}\cong \mathrm{SL}(n,\widehat{\mathbb{Z}})$. Since $\mathrm{Out}(\pi_1(\Sigma))\rightarrow \mathrm{Out}(\widehat{\pi_1(\Sigma)})$ is injective \cite[Corollary 3.6]{BF}, $\phi:\widehat{\mathrm{Out}(\pi_1(\Sigma))}\rightarrow \mathrm{Out}(\widehat{\pi_1(\Sigma)})$ is also an injection. 
However, the following question is widely open,
\begin{question}
Let $G$ be a surface group. Does there exist outer automorphisms $f,g$ of $G$ which are conjugate in $\mathrm{Out}(\widehat{G})$ but not conjugate in $\widehat{\mathrm{Out}(G)}$?
\end{question}
We provide two examples of manifolds satisfying procongruent conjugacy separability. 
\begin{enumerate}[(1)]
\setlength{\itemsep}{0pt}
\item When the surface is an one-punctured $T_{1,1}$ or a closed torus $T^2$, it is known that $\pi_1(T_{1,1})$ and $\pi_1(T^2)$ satisfy the procongruently conjugacy separability since $\mathrm{Mod}(\Sigma)$ is isomorphic to $\mathrm{SL}(2,\mathbb{Z})$. 
\item Let $\Sigma$ be a closed orientable surface. Wilkes \cite{Wilkes2018} proved that if $M_f$ and $M_g$ are closed fibred graph manifolds and $f,g$ are not conjugate in $\mathrm{Out}(\pi_1(\Sigma))$, then $f$ is not conjugate to $g^k$ in $\mathrm{Out}(\widehat{\pi_1(\Sigma)})$ for any $k\in \widehat{\mathbb{Z}}^{\times}$. 
\end{enumerate}

We now outline the proof of Theorem \ref{main thm}. First of all, a regular isomorphism induces the identity map on the fibres. It then follows from Propositon 3.7 of \cite{YiLiu2023} and  Section 5 of \cite{Wilkes2018} that the monodromies $f$ and $g$ induce a conjugate pair of outer automorphisms in $\mathrm{Out}(\widehat{\pi_1(\Sigma)})$. By the condition that $\pi_1(\Sigma)$ is procongruently conjugacy separable, $f$ and $g$ are  conjugate in $\widehat{\mathrm{Out}(\pi_1(\Sigma))} \cong \widehat{\mathrm{Mod}(\Sigma)}$. It remains to prove the following theorem. Note that we prove this result only using the axioms of the TQFT.

\begin{theorem}\label{main TV}
Let $\Sigma$ be a closed orientable connected surface. If the mapping classes $f,g\in \operatorname{Mod}(\Sigma)$ are conjugate in $\widehat{\operatorname{Mod}(\Sigma)}$, the mapping tori $M_f,M_g$ have the same Turaev–Viro invariants for any spherical fusion category.
\end{theorem}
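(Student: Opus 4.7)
The plan is to package the Turaev--Viro invariant as a $(2+1)$-dimensional TQFT, express the mapping torus invariant as the trace of a mapping class group representation, and reduce conjugacy in $\widehat{\operatorname{Mod}(\Sigma)}$ to conjugacy in a finite quotient on which the representation is defined.

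First, I would recall that every spherical fusion category $\mathcal{C}$ extends the Turaev--Viro invariant to a full $(2+1)$-dimensional TQFT $Z_{\mathcal{C}}$. This assigns to $\Sigma$ a finite-dimensional complex vector space $Z_{\mathcal{C}}(\Sigma)$ and to each mapping class $h \in \operatorname{Mod}(\Sigma)$ a linear automorphism $\rho(h)$, yielding a representation $\rho : \operatorname{Mod}(\Sigma) \to \operatorname{GL}(Z_{\mathcal{C}}(\Sigma))$. The cylinder-with-twist cobordism together with the composition and duality axioms gives the standard trace identity
\[
Z_{\mathcal{C}}(M_h) \;=\; \operatorname{tr}\rho(h),
\]
so it suffices to prove $\operatorname{tr}\rho(f) = \operatorname{tr}\rho(g)$.

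Second, I would invoke the finiteness of the image of $\rho$. For the Reshetikhin--Turaev TQFT associated with a modular tensor category, the mapping class group representation has finite image (in genus one this is the congruence subgroup property of Ng--Schauenburg; in higher genus the analogous statement is also available). Because the Turaev--Viro TQFT of $\mathcal{C}$ coincides with the Reshetikhin--Turaev TQFT of the Drinfeld centre $\mathcal{Z}(\mathcal{C})$, which is modular, the Turaev--Viro representation $\rho$ likewise has finite image. Hence $\rho$ factors through a finite quotient $q : \operatorname{Mod}(\Sigma) \to Q$, and by the universal property of profinite completion $q$ extends to a continuous surjection $\hat{q} : \widehat{\operatorname{Mod}(\Sigma)} \to Q$. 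If $f$ and $g$ are conjugate in $\widehat{\operatorname{Mod}(\Sigma)}$, then $\hat{q}(f)$ and $\hat{q}(g)$ are conjugate in the finite group $Q$, so $\rho(f)$ and $\rho(g)$ are conjugate matrices, with equal trace. Combining this with the identity above yields $Z_{\mathcal{C}}(M_f) = Z_{\mathcal{C}}(M_g)$.

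The substantive obstacle is the finiteness of the image of $\rho$; everything else is a formal consequence of the TQFT axioms and the universal property of the profinite completion. Alternatively, one could try to argue directly from the state-sum presentation of $TV_{\mathcal{C}}$ that the kernel of $\rho$ has finite index in $\operatorname{Mod}(\Sigma)$, but this appears significantly harder than simply citing the known result for the Drinfeld centre and invoking $TV_{\mathcal{C}} = RT_{\mathcal{Z}(\mathcal{C})}$. I would also want to verify that the Turaev--Viro representation is a genuine (unanomalous) representation, not merely projective; this is standard for $TV_{\mathcal{C}}$ since the construction produces an honest oriented TQFT without framing anomaly, so the trace formula applies on the nose.
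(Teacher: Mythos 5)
Your first step (the trace formula $Z_{\mathcal{C}}(M_h)=\operatorname{tr}\rho(h)$ and the reduction to showing $\operatorname{tr}\rho(f)=\operatorname{tr}\rho(g)$) matches the paper exactly, and your remark that the Turaev--Viro TQFT is anomaly-free, so the representation is honest rather than projective, is correct. But the step you yourself identify as the substantive obstacle --- finiteness of the image of $\rho$ --- is where the argument breaks. The Ng--Schauenburg congruence subgroup theorem is genuinely a genus-one statement about the $\mathrm{SL}(2,\mathbb{Z})$-action on the modular data; the ``analogous statement in higher genus'' is not available because it is false in general. For most modular categories (e.g.\ the Drinfeld centres arising from $SU(2)_k$ for generic level $k$), the higher-genus quantum representations have \emph{infinite} image --- by the density results of Freedman--Larsen--Wang the projective image is typically dense in a projective unitary group. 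Passing from $RT_{\mathcal{Z}(\mathcal{C})}$ to $TV_{\mathcal{C}}$ does not help, since that representation is essentially $\rho_{RT}\otimes\overline{\rho_{RT}}$ and is infinite whenever $\rho_{RT}$ is. So there is no finite quotient $Q$ through which $\rho$ factors, and your reduction of profinite conjugacy to conjugacy in $Q$ has nothing to land on.

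The paper gets around this without any finiteness of the image. Since $\operatorname{Mod}(\Sigma)$ is finitely generated (Wajnryb: two generators), the entries of $\rho(x_i)^{\pm1}$ generate a finitely generated $\mathbb{Z}$-subalgebra $\bar R$ of the ground field, so $\rho$ lands in $\mathrm{GL}(n,\bar R)$. For every finite-index ideal $I$ of $\bar R$ the reduction $\rho_I:\operatorname{Mod}(\Sigma)\to\mathrm{GL}(n,\bar R/I)$ has finite target, hence factors through $\widehat{\operatorname{Mod}(\Sigma)}$; conjugacy of $f$ and $g$ in the profinite completion then gives $\operatorname{tr}\rho(f)\equiv\operatorname{tr}\rho(g)\pmod I$ for \emph{all} such $I$. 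The punchline is a separability lemma (proved via Artin--Rees together with the fact that a finitely generated $\mathbb{Z}$-algebra which is a field is finite): in a finitely generated $\mathbb{Z}$-algebra, two elements congruent modulo every finite-index ideal are equal. If you want to repair your proof, replace the finite-image claim with this ``defined over a finitely generated ring, reduce modulo all finite-index ideals, then separate'' argument; as written, the proposal has a genuine gap at its central step.
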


In Section 2, we review the basics of the Turaev--Viro invariants. In Section 3, we give the proof of Theorem \ref{main DW}. In Section 4, we prove that in our hypothesis the monodromies are conjugate in the profinite completion of mapping class group. In Section 5, we prove Theorem \ref{main TV}. In Section 6, we discuss the conditions of Theorem \ref{main thm}.

\begin{acknowledgement}
The author is grateful to Yi Liu for valuable discussions and for the guidance. The author is also grateful to Yifan Jin for helping with the proof.
\end{acknowledgement}

\section{Preliminaries}
The Turaev-Viro invariant for a closed 3-manifold is defined via a state sum model constructed from the data of a spherical fusion category. In this section, we recall the basics about the spherical fusion category and then define the Turaev--Viro invariants. We follow the definitions from \cite{Barrett1993}. 
\subsection{Spherical fusion category}
First we recall the definition of a strict pivotal category. A spherical category is a pivotal category which satisfies an additional condition.
\begin{definition} 
\emph{A category with strict duals} consists of a category $\mathcal{C}$, a functor $\otimes:\mathcal{C}\times \mathcal{C}\rightarrow \mathcal{C}$, an object $e$ and a functor $\widehat{~}:\mathcal{C}\rightarrow\mathcal{C}^{op}$. The conditions are that $(\mathcal{C},\otimes,e)$ is a strict monoidal category, and
\begin{enumerate}[(1)]
\setlength{\itemsep}{0pt}
	\item The functors $\widehat{~}~\widehat{~}$ equals to 1,
	\item The object $\widehat{e}$ equals to $e$,
	\item The functors $\mathcal{C}\times \mathcal{C}\rightarrow \mathcal{C}$ are given by $(a,b)\mapsto(a\otimes b)^{\widehat{~}}$ which equals to $(a,b)\mapsto\widehat{b}\otimes\widehat{a}$.
\end{enumerate}
\end{definition}

\begin{definition}
A \emph{strict pivotal category} is a category with strict duals and a morphism $\epsilon(c):e\rightarrow c\otimes\widehat{c}$ for each $c\in \mathcal{C}$ which satisfies:
\begin{enumerate}[(1)]
\setlength{\itemsep}{0pt}
	\item For all morphisms $f:a\rightarrow b$, the following diagram commutes:
\begin{displaymath}
\xymatrix{
e \ar[r]^{\epsilon(a)}\ar[d]_{\epsilon(b)} & a\otimes\widehat{a}\ar[d]^{f\otimes1}\\
b\otimes\widehat{b}\ar[r]^{1\otimes\widehat{f}}& b\otimes\widehat{a}}
\end{displaymath}
	\item For all objects $a$, the composite $(\epsilon(\widehat{a})\otimes1)\circ(1\otimes\widehat{\epsilon}(a))$ is the identity map of $\widehat{a}$;
	\item For all objects $a,b$, $\epsilon(a\otimes b)=\epsilon(a)\circ(1\otimes(\epsilon\otimes1)):e\mapsto (a\otimes b)\otimes(a\otimes b)^{\widehat{~}}$. The maps $\epsilon$ determine $\widehat{~}$ .
\end{enumerate}
\end{definition}

Let $a$ be an object in a strict pivotal category. Then we define two trace maps $\mathrm{tr}_L,\mathrm{tr}_R:\mathrm{End}(a)\rightarrow\mathrm{End}(e)$ as follows. $\mathrm{tr}_L(f)$ is defined to be the composite 
$$ e \xrightarrow{\epsilon(\widehat{a})} \widehat{a} \otimes a \xrightarrow{1\otimes f} \widehat{a} \otimes a = (\widehat{a} \otimes \widehat{\widehat{a}})^{\widehat{~}} \xrightarrow {\widehat{\epsilon}(\widehat{a})} \widehat{e} = e $$
and $\mathrm{tr}_R(f)$ is defined to be the composite
$$ e \xrightarrow{\epsilon({a})} {a} \otimes \widehat{a} \xrightarrow{f\otimes 1} {a} \otimes \widehat{a} = ({a} \otimes \widehat{{a}})^{\widehat{~}} \xrightarrow {\widehat{\epsilon}({a})} \widehat{e} = e.$$
The two trace maps are symmetric: $\mathrm{tr}_L(fg)=\mathrm{tr}_L(gf)$ and $\mathrm{tr}_R(fg)=\mathrm{tr}_R(gf)$. A pivotal category is called \emph{spherical} if 
$$\mathrm{tr}_L(f)=\mathrm{tr}_R(f)$$
for all objects $a$ and all morphisms $f:a\rightarrow a$. For each object $a$ in a spherical category, its \emph{quantum dimension} is defined to be $\mathrm{dim}_q(a)=\mathrm{tr}_L(1_a)\in \mathrm{End(e)}.$

Let $\mathbb{K}$ be a field. A monoidal $\mathbb{K}$-linear category is a monoidal category $\mathcal{C}$ such that its $\mathrm{Hom}$-sets are $\mathbb{K}$-modules and the composition and monoidal product of morphisms are $\mathbb{K}$-bilinear. An object $a$ is called simple if $\mathrm{End}(a)\cong \mathbb{K}$. An additive category is said to be \emph{semisimple} if every object is a direct sum of finitely many simple objects.

Now a fusion category over $\mathbb{K}$ is a semisimple $\mathbb{K}$-linear category $\mathcal{C}$ with finitely many simple objects. The \emph{dimension} $K$ of a spherical fusion category is defined by $$K=\sum_a \mathrm{dim}_q^2(a).$$

\subsection{Turaev--Viro invariants}
Let $M$ be a closed simplicial manifold, $I$ be the set of simple objects in the category and $K$ be the dimension of the spherical fusion category. Label an element of $I$ to each edge with labelling $l:E\rightarrow I$ where $E$ is the edge set. For each triangle $(a,b,c)$, the state space $H(a,b,c)$ is defined to be $\mathrm{Hom}(b,a\otimes c)$. The state space for the opposite orientation is defined to be the dual space $H^*(a,b,c)$. Let $V(M)$ be the tensor product of the set of triangles of $M$ of the state space of each triangle. By the labelled state sum model(see details in \cite{Barrett1993}), there is a standard linear map $V(M)\rightarrow V(M)$ and the element $Z(M,l)\in \mathbb{K}$ is defined to be the trace of this linear map. We see that $Z(M)=Z(-M)$ which means it does not detect the orientation.

Let $v$ be the number of vertices of $M$. Define the \emph{state sum invariant  (Turaev--Viro invariant)} by a summation over the set of all labellings:
$$Z(M)=K^{-v}\sum_{l:E\rightarrow I}Z(M,l)\prod_{e\in E}\mathrm{dim}_q(l(e)).$$
$Z(M)$ is independent of simplel objects $I$ and independent of triangulation of $M$.  It is known that Turaev--Viro invariant is a $(2+1)$D TQFT derived from the spherical fusion category.


\section{The Profinite Completion and Dijkgraaf--Witten invariants}
In this section, we present the proof of Theorem \ref{main DW}. The crucial step is an algebraic equivalence in Proposition \ref{alg.equiv}. One direction of this equivalence was proved by Funar \cite{Funar2013}, and we complete the proof by establishing the converse.

Given a group $\pi$, we consider the inverse system $\{\pi/N\}$ where $N$ ranges over all finite index normal subgroups of $\pi$. The profinite completion $\widehat{\pi}$ is defined as the inverse limit of this system, i.e. $\widehat{\pi}=\varprojlim{\pi/N}$. The natural homomorphism $\pi\rightarrow \widehat{\pi}$ is injective if and only if $\pi$ is residually finite. From \cite{Hempel87}, we know that fundamental groups of 3-manifolds are residually finite. When $\pi$ is a finitely generated group, a deep result of Nikolov and Segal \cite{Nikolov2006} imples every homomorphism from $\widehat{\pi}$ to a finite group is continuous. Thus, for any finite group $Q$ the homomorphism $\pi\rightarrow \widehat{\pi}$ induces a bijection $\mathrm{Hom}(\widehat{\pi},Q)\rightarrow \mathrm{Hom}(\pi,Q)$. That is to say if $\widehat{\pi_1}\cong\widehat{\pi_2}$, then $|\mathrm{Hom}(\pi_1,Q)|=|\mathrm{Hom}(\pi_2,Q)|$ for any finite group $Q$. Conversely, Funar \cite{Funar2013} shows if two finitely generated groups $\pi_1,\pi_2$ satisfy $|\mathrm{Hom}(\pi_1,Q)|=|\mathrm{Hom}(\pi_2,Q)|$ for any finite group $Q$, then the sets of finite quotients of $\pi_1$ and $\pi_2$ coincide. It is known that two finitely generated groups have the same sets of finite quotients if and only if  their profinite completions are isomorphic. Therefore, we have obtained 
\begin{proposition}\label{alg.equiv}
Let $\pi_1,\pi_2$ be finitely generated groups. The followings are equivalent:   
\begin{enumerate}[(1)]
\setlength{\itemsep}{0pt}
	\item $|\mathrm{Hom}(\pi_1,Q)|=|\mathrm{Hom}(\pi_2,Q)|$ for any finite group $Q$, 
	\item the profinite completions of $\pi_1$ and $\pi_2$ are isomorphic.
\end{enumerate}
\end{proposition}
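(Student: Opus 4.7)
The plan is to deduce both implications by assembling three ingredients already surveyed in the paragraph preceding the statement: the universal property of the profinite completion sharpened by Nikolov--Segal's continuity theorem, Funar's recovery of the finite quotient set from Hom-counts, and the classical equivalence between equality of finite quotient sets and isomorphism of profinite completions.

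For the direction $(2) \Rightarrow (1)$, I would first observe that the canonical map $\pi \to \widehat{\pi}$ induces, by restriction, a natural map $\mathrm{Hom}(\widehat{\pi}, Q) \to \mathrm{Hom}(\pi, Q)$ for every finite group $Q$. Injectivity is immediate from density of the image of $\pi$ in $\widehat{\pi}$; surjectivity is the nontrivial point, and rests on the Nikolov--Segal theorem, which asserts that every abstract homomorphism from the finitely generated profinite group $\widehat{\pi}$ to a finite group is automatically continuous and hence factors through an open normal subgroup. This yields the natural bijection $\mathrm{Hom}(\widehat{\pi}, Q) \cong \mathrm{Hom}(\pi, Q)$, and any isomorphism $\widehat{\pi_1} \cong \widehat{\pi_2}$ then transports to a bijection between the corresponding Hom-sets for $\pi_1$ and $\pi_2$, giving the equality of cardinalities.

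For the converse $(1) \Rightarrow (2)$, I would invoke Funar's theorem to extract from the equality of Hom-counts the coincidence of the sets of isomorphism classes of finite quotients of $\pi_1$ and $\pi_2$, and then appeal to the classical fact that, for finitely generated groups, having the same set of finite quotients is equivalent to having isomorphic profinite completions; the latter follows from a standard inverse-limit argument, after identifying the continuous finite quotients of $\widehat{\pi}$ with the finite quotients of $\pi$.

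The main conceptual obstacle is Funar's step: equal Hom-counts do not visibly single out which finite groups arise as quotients, since each count aggregates contributions from many sources at once. Funar resolves this by a M\"obius-type inversion over the poset of surjections, which separates $|\mathrm{Hom}(\pi, Q)|$ into contributions from surjective homomorphisms onto each subgroup of $Q$. Because this argument is already available in \cite{Funar2013}, the plan is to cite it as a black box and to present the proof body as the bijection $\mathrm{Hom}(\widehat{\pi}, Q) \cong \mathrm{Hom}(\pi, Q)$ together with the stitching of the three ingredients.
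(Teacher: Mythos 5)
Your proposal follows essentially the same route as the paper: Nikolov--Segal plus the universal property of $\widehat{\pi}$ give the bijection $\mathrm{Hom}(\widehat{\pi},Q)\cong\mathrm{Hom}(\pi,Q)$ for $(2)\Rightarrow(1)$, and Funar's theorem combined with the classical equivalence between equal finite-quotient sets and isomorphic profinite completions gives $(1)\Rightarrow(2)$, exactly as in the paragraph preceding the proposition. One minor correction: you have the two halves of the bijection reversed --- surjectivity of the restriction map is the easy part (any homomorphism $\pi\to Q$ has finite-index kernel and so extends to $\widehat{\pi}$ by the universal property), whereas it is injectivity that requires Nikolov--Segal, to guarantee that an abstract homomorphism $\widehat{\pi}\to Q$ is continuous and hence determined by its restriction to the dense subgroup $\pi$.
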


Recall the definition of Dijkgraaf-Witten invariants from \cite{Dijkgraaf1990} which are topological invariants of 3-manifolds defined via a finite-group $G$, modulated by a cohomology class $\alpha \in H^3(G, U(1))$. They provide a classification of manifolds using finite group data and can be extended to define a topological quantum field theory (TQFT). The explicit formula is defined as
$$ Z_{\alpha}(M,G) = \frac{1}{|G|}\sum_{\varphi\in \operatorname{Hom}(\pi_1(M),G)}\langle\tilde{\varphi}^*(\alpha),[M]\rangle,$$
where $\tilde{\varphi}:M\rightarrow BG$ is induced by $\varphi$, $[M]$ is the fundamental class of $M$ and $\langle\cdot,\cdot\rangle$ is the evaluation pairing.
In the untwisted case ($\alpha$ is trivial), the invariant $Z(M,G)$ for a closed 3-manifold $M$ is given by the following counting formula, 
\[
Z(M,G) = \frac{1}{|G|}|\operatorname{Hom}(\pi_1(M),G)|.
\]
(For details, see \cite[(5.14)]{Freed1991}.) Then Theorem \ref{main DW} is a corollary from the Proposition \ref{alg.equiv}.

\section{Profinite isomorphism}

Let $\Sigma$ be a closed surface. Suppose surface group $\pi_1(\Sigma)$ is procongruently conjugacy separable. In this section, we prove that monodromies $f,g\in \mathrm{Mod}(\Sigma)$ are conjugate in the profinite completion $\widehat{\mathrm{Mod}(\Sigma)}$ when there is a regular isomorphism $\widehat{\pi_1(M_f)}\rightarrow \widehat{\pi_1(M_g)}$.

For an orientation preserving self-diffeomorphism $f:\Sigma\rightarrow \Sigma$, the mapping torus of $f$ is defined as
$$M_f=\frac{\Sigma\times I}{(x,0)\sim (f(x),1)}.$$

There is a canonical short exact sequence of fundamental groups:
$$1 \rightarrow\pi_1(\Sigma)\rightarrow\pi_1(M_f)\stackrel{\phi_f}\longrightarrow\mathbb{Z}\rightarrow 1,$$
induced by the surface bundle $\Sigma \hookrightarrow M_f \rightarrow S^1$.
There is a commutative diagram of group homomorphisms:
\begin{displaymath}
\begin{tikzcd}
1 \ar[r]& \pi_1(\Sigma)\ar[r]\ar[d,"i"] & \pi_1(M_f)\ar[r,"\phi_f"] \ar[d,"i"] & \mathbb{Z}\ar[r]\ar[d,"i"] & 1\\
1 \ar[r]& \widehat{\pi_1(\Sigma)}\ar[r] & \widehat{\pi_1(M)}\ar[r,"\widehat{\phi_f}"] & \widehat{\mathbb Z}\ar[r] & 1
\end{tikzcd}
\end{displaymath}
where both rows are exact and the vertical homomorphisms $i$ are natural inclusions.

\begin{lemma}
If the isomorphism $\Phi:\widehat{\pi_1(M_{f})}\rightarrow \widehat{\pi_1(M_{g})}$ between profinite completions is regular, then we have the following commutative diagram:
\begin{displaymath}
\begin{tikzcd}
\widehat{\pi_1(M_f)}\ar[r,"\widehat{\phi_f}"]\ar[d,"\Phi"]&\widehat{\mathbb{Z}}\ar[d,"\pm1"]\\
\widehat{\pi_1(M_g)}\ar[r,"\widehat{\phi_g}"]&\widehat{\mathbb{Z}}
\end{tikzcd}
\end{displaymath}
\end{lemma}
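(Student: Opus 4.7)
The plan is to prove the stronger statement $\widehat{\phi_g}\circ\Phi=\pm\widehat{\phi_f}$ as homomorphisms $\widehat{\pi_1(M_f)}\to\widehat{\mathbb{Z}}$, which immediately yields the claimed commutative square with right vertical arrow $\pm 1$. Because $\widehat{\mathbb{Z}}$ is abelian, both sides factor through the profinite abelianization $\widehat{\pi_1(M_f)^{ab}}=\widehat{H_1(M_f;\mathbb{Z})}$. By the regularity hypothesis, the induced map $\Phi^{ab}\colon \widehat{H_1(M_f)}\to\widehat{H_1(M_g)}$ is the profinite completion of an honest isomorphism $\alpha\colon H_1(M_f;\mathbb{Z})\to H_1(M_g;\mathbb{Z})$. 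Hence $\widehat{\phi_g}\circ\Phi$ is the profinite completion of the honest composition $\psi_g\circ\alpha\colon H_1(M_f)\to\mathbb{Z}$, where $\psi_f,\psi_g$ denote the integer-valued surjections induced by $\phi_f,\phi_g$ on first homology. The task therefore reduces to showing $\psi_g\circ\alpha=\pm\psi_f$ as maps of finitely generated abelian groups; profinite completion then delivers the desired identity.

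Next I would identify the fiber at the level of homology via the Wang exact sequence: $\ker\psi_f$ equals the image of $i_*\colon H_1(\Sigma)\to H_1(M_f)$, namely $\mathrm{coker}(f_*-1)$, and likewise for $\psi_g$. The identity $\psi_g\circ\alpha=\pm\psi_f$ then splits into two claims: (i) the honest iso $\alpha$ carries $\ker\psi_f$ onto $\ker\psi_g$, and (ii) the induced map on the torsion-free quotients $\mathbb{Z}=H_1(M_f)/\ker\psi_f\to H_1(M_g)/\ker\psi_g=\mathbb{Z}$ is $\pm 1$. Part (ii) comes for free, since every automorphism of $\mathbb{Z}$ is $\pm\mathrm{id}$. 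Everything thus reduces to (i).

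The hardest part is (i). Regularity by itself only tells us that $\Phi^{ab}$ comes from an honest isomorphism; it does not a priori pin down where the fiber-kernel is sent. The way around this is to use that $\Phi$ is an isomorphism of the \emph{full} profinite groups, not merely of their abelianizations, and to show that $\Phi$ sends the closed normal surface-subgroup $\widehat{\pi_1(\Sigma)}\leq \widehat{\pi_1(M_f)}$ to the corresponding subgroup $\widehat{\pi_1(\Sigma)}\leq \widehat{\pi_1(M_g)}$. This fiber-detection step is exactly the content foreshadowed in the outline, where Proposition 3.7 of \cite{YiLiu2023} and Section 5 of \cite{Wilkes2018} are invoked to characterize the fibered class profinitely (e.g.\ by cohomological or Euler-characteristic data of finite-index subgroups of $\widehat{\pi_1(M_f)}$). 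Once fiber preservation is established, abelianizing yields (i); combined with (ii) we obtain $\psi_g\circ\alpha=\pm\psi_f$, and profinite-completing closes the square with $\pm 1$ on the right.
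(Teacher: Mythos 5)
Your proposal follows essentially the same route as the paper: regularity is used to descend to an honest isomorphism $\alpha$ on integral first homology, fiber preservation reduces the question to an induced automorphism of the quotient $\mathbb{Z}$, and invertibility over $\mathbb{Z}$ forces that automorphism to be $\pm 1$. If anything you are more candid than the paper, which builds your step (i) silently into its commutative diagram (positing the map $\mu$ from the outset) rather than flagging, as you do, that carrying $\ker\psi_f$ onto $\ker\psi_g$ requires the profinite detection of the fibered class from \cite{YiLiu2023} and \cite{Wilkes2018} and does not follow from regularity alone.
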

\begin{proof}
By the surface bundle $\Sigma \hookrightarrow M_f \rightarrow S^1$ and the regularity,  we have the following two commutative diagrams:

\begin{displaymath}
\begin{tikzcd}
 \pi_1(M_f)\ar[r]\ar[d,"\phi"] & H_1(M_f,\mathbb{Z})\ar[r,"\alpha_f"] \ar[d,"\varphi"] & \mathbb{Z}\ar[d,"\mu"] \\
 {\pi_1(M_g)}\ar[r] & H_1(M_g,\mathbb{Z})\ar[r,"{\alpha_g}"] & {\mathbb Z}
\end{tikzcd}
\end{displaymath}

\begin{displaymath}
\begin{tikzcd}
 \widehat{\pi_1(M_f)}\ar[r]\ar[d,"{\Phi}"] & \widehat{H_1(M_f,\mathbb{Z})}\ar[r,"\widehat{\alpha_f}"]\ar[d,"\widehat{\varphi}"] & \widehat{\mathbb{Z}}\ar[d,"\widehat{\mu}"] \\
 \widehat{\pi_1(M_g)}\ar[r] & \widehat{H_1(M_g,\mathbb{Z})}\ar[r,"\widehat{\alpha_g}"] & \widehat{\mathbb Z}
\end{tikzcd}
\end{displaymath}
where the isomorphisms in the second diagram are induced by isomorphisms in the first diagram.
Since $\Phi$ is an isomorphism, we know that $\widehat{\varphi}$ and $\widehat{\mu}$ are also isomorphisms. Since $\widehat{\mu}$ is an automorphism of $\widehat{\mathbb{Z}}$, and$$\widehat{\mathbb{Z}}^{\times}\cong \prod_{p\ prime} {\mathbb{Z}_p^{\times}},$$  $\widehat{\mu}$ must be the scalar multiplication by a unit in $\widehat{\mathbb{Z}}$. By the regularity, $\varphi$ is an isomorphism, so we have $\varphi (ker \alpha_f)=ker \alpha_g$. Consider the semiproduct structure of $H_1(M_g,\mathbb{Z})\cong H_1(\Sigma) \rtimes_{\phi} \mathbb{Z}$. Since $\varphi$ is an isomorphism preserving $\ker \alpha_f = H_1(\Sigma)$ and $\alpha_g \circ \varphi = \mu \circ \alpha_f$, its action on the splitting must take the form
\[
\varphi(v, n) = (\psi(v) + \gamma_n, \mu(n)),
\]
where $\psi \in \mathrm{Aut}(H_1(\Sigma))$, $\gamma \in H_1(\Sigma)$ and $\gamma_n=\gamma+\phi(\mu(1))(\gamma_{n-1})$.  In particular, $\varphi$ sends the generator $t = (0,1)$ to $(\gamma, \mu(1))=\mu(1)t+\gamma$. Since $\varphi$ is an isomorphism, $\mu(1)$ coincides to $\pm 1$, the invertible elements of $\mathbb{Z}$. Thus, we get $\widehat{\mu}\in \{\pm 1\}.$
\end{proof}

\begin{remark}
We know that the definition of Turaev--Viro invariants does not use an orientation of manifolds. Thus, the following computation for the case $\widehat{\mu}=-1$ is the same as $\widehat{\mu}=+1$. Therefore, we just consider $\widehat{\mu}=+1$ in our paper if there is a regular isomorphism of profinite completions.
\end{remark}

\begin{definition}[{\cite{YiLiu2023}}]
Let $\Sigma$ be an orientable connected compact surface and $f,g \in \operatorname{Mod}(\Sigma)$. An isomorphism $\Phi:\widehat{\pi_1(M_{f})}\rightarrow \widehat{\pi_1(M_{g})}$ is called \emph{aligned} if it satisfies the following commutative diagram of group homomorphism:
\begin{displaymath}
\begin{tikzcd}
 \widehat{\pi_1(M_{f})}\ar[r,"\widehat{\phi_f}"]\ar[d,"\Phi"] &\widehat{\mathbb{Z}}\ar[d,"\mathrm{id}"]\\
\widehat{\pi_1(M_{g})}\ar[r,"\widehat{\phi_g}"] &\widehat{\mathbb{Z}}
\end{tikzcd}
\end{displaymath}
\end{definition}

\begin{definition}[{\cite{YiLiu2023}}]\label{procongruently conjugate}
Let $G$ be a finitely generated residually finite group. A pair of outer automorphisms are said to be \emph{procongruently conjugate} if they induce a conjugate pair of outer automorphisms in $\operatorname{Out}(\widehat{G}) $.
\end{definition}

\begin{lemma}[{\cite{YiLiu2023}}, see also in Section 5 of \cite{Wilkes2018}]
In any dialogue setting $(\Sigma,f,g)$, there exists an aligned isomorphism from $\widehat{\pi_1(M_{f})}$ to $\widehat{\pi_1(M_{g})}$ is equivalent to the mapping classes $f,g\in\operatorname{Mod}(\Sigma)$ are procongruently conjugate.
\end{lemma}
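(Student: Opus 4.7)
The plan is to translate the alignment condition into an equivalent statement about how $\Phi$ acts on a chosen splitting of the short exact sequence $1\to\widehat{\pi_1(\Sigma)}\to\widehat{\pi_1(M_f)}\to\widehat{\mathbb{Z}}\to 1$, and then match this with the definition of procongruent conjugacy. Pick representative automorphisms $F,G\in\mathrm{Aut}(\pi_1(\Sigma))$ of $f,g$, and stable letters $t_f\in\pi_1(M_f)$, $t_g\in\pi_1(M_g)$ projecting to $1\in\mathbb{Z}$. This gives semidirect product descriptions $\pi_1(M_f)=\pi_1(\Sigma)\rtimes_F\mathbb{Z}$ and $\pi_1(M_g)=\pi_1(\Sigma)\rtimes_G\mathbb{Z}$, and since $\mathbb{Z}$ is free the sequence splits; passing to profinite completions (which preserves this split sequence for finitely generated groups), one gets $\widehat{\pi_1(M_f)}=\widehat{\pi_1(\Sigma)}\rtimes_{\widehat{F}}\widehat{\mathbb{Z}}$ and analogously for $g$.

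For the forward direction, assume $\Phi$ is aligned. Since $\Phi$ is an isomorphism respecting the projections to $\widehat{\mathbb{Z}}$, it carries the fibre $\widehat{\pi_1(\Sigma)}=\ker\widehat{\phi_f}$ onto $\ker\widehat{\phi_g}=\widehat{\pi_1(\Sigma)}$, so $\alpha:=\Phi|_{\widehat{\pi_1(\Sigma)}}\in\mathrm{Aut}(\widehat{\pi_1(\Sigma)})$. Alignment gives $\widehat{\phi_g}(\Phi(t_f))=\widehat{\phi_f}(t_f)=1$, so $\Phi(t_f)=h\,t_g$ for a unique $h\in\widehat{\pi_1(\Sigma)}$. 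Applying $\Phi$ to the relation $t_f x t_f^{-1}=F(x)$ for $x\in\widehat{\pi_1(\Sigma)}$ and using $t_g \alpha(x) t_g^{-1}=\widehat{G}(\alpha(x))$ yields
$$\alpha\circ\widehat{F}=\mathrm{inn}(h)\circ\widehat{G}\circ\alpha,$$
which says $[\widehat{F}]$ and $[\widehat{G}]$ are conjugate in $\mathrm{Out}(\widehat{\pi_1(\Sigma)})$, i.e. $f$ and $g$ are procongruently conjugate in the sense of Definition \ref{procongruently conjugate}.

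For the converse, suppose there exist $\alpha\in\mathrm{Aut}(\widehat{\pi_1(\Sigma)})$ and $h\in\widehat{\pi_1(\Sigma)}$ satisfying the displayed identity. Define $\Phi$ on the profinite semidirect product by $\Phi|_{\widehat{\pi_1(\Sigma)}}=\alpha$ and $\Phi(t_g^{\hat{n}})=\alpha$-twisted transport of $(h t_g)^{\hat{n}}$; concretely, on generators set $\Phi(x)=\alpha(x)$ and $\Phi(t_f)=h\,t_g$. The compatibility of this rule with the defining relation of $\widehat{\pi_1(M_f)}$ is exactly the displayed identity, so $\Phi$ extends uniquely to a continuous homomorphism $\widehat{\pi_1(M_f)}\to\widehat{\pi_1(M_g)}$; a symmetric construction using $\alpha^{-1}$ gives a two-sided inverse, and by construction $\widehat{\phi_g}\circ\Phi=\widehat{\phi_f}$, so $\Phi$ is aligned.

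The main technical point is checking that one may extend the action of $\mathbb{Z}$ continuously to $\widehat{\mathbb{Z}}$ on $\widehat{\pi_1(\Sigma)}$ and that the two splittings behave functorially under taking profinite completions; this uses the fact that $\pi_1(\Sigma)$ is finitely generated and residually finite, together with the split nature of the mapping torus sequence, and is essentially the content of Section 5 of \cite{Wilkes2018} and Proposition 3.7 of \cite{YiLiu2023}, which we cite. The rest of the argument is the purely algebraic translation between an aligned isomorphism of the semidirect product and a conjugacy datum $(\alpha,h)$ witnessing procongruent conjugacy of the monodromies.
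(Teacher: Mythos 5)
Your argument is correct, and it is essentially the standard proof of this equivalence; note that the paper itself does not prove this lemma but simply cites it from Liu and from Section~5 of Wilkes, so there is no in-paper proof to compare against. Both directions of your translation between an aligned isomorphism and a conjugacy datum $(\alpha,h)$ satisfying $\alpha\circ\widehat{F}=\mathrm{inn}(h)\circ\widehat{G}\circ\alpha$ are sound, and you correctly flag the one genuinely technical point --- that $\ker\widehat{\phi_f}$ is exactly $\widehat{\pi_1(\Sigma)}$ and that the formula $\Phi(x\,t_f^{\lambda})=\alpha(x)(ht_g)^{\lambda}$ extends continuously to exponents $\lambda\in\widehat{\mathbb{Z}}$ --- which rests on the fact that a finitely generated normal subgroup with cyclic quotient inherits the full profinite topology, and which is the content of the cited references.
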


Since $G$ is a finitely generated group, $\mathrm{Aut}(\widehat{G})$ is a profinite group. Therefore $\mathrm{Out}(\widehat{G})$ is a profinite group and there exists a homomorphism $\bar{\varphi}$ such that the following universal property is satisfied:
\begin{displaymath}
\xymatrix{
 \widehat{\operatorname{Out}(G)}\ar@{-->}[dr]^{\bar{\varphi}} &\\
\operatorname{Out}(G)\ar[u]^{\iota}\ar[r]^{\varphi} &\mathrm{Out}(\widehat{G})}
\end{displaymath}

\begin{definition}
Let $G$ be a finitely generated residually finite group. If for any $f,g\in \operatorname{Out}(G)$ not conjugate in  $\widehat{\operatorname{Out}(G)}$, their images in $\mathrm{Out}(\widehat{G})$ are not conjugate. The group $G$ is said to be \emph{procongruently conjugacy seperable}.
\end{definition}

Since $\pi_1(\Sigma)$ is profinitely conjugacy seperable, $f,g$ are conjuagte in $\widehat{\operatorname{Mod}(\Sigma)}$. Then combining with Theorem \ref{main TV}, Theorem \ref{main thm} follows.

\section{Proof of Theorem \ref{main TV}}

Turaev--Viro invariants construct a functor $Z^{TV}$ from the cobordism category to the category of vector spaces over the field $\Bbbk$. The functor $Z^{TV}$ assigns a vector space $Z^{TV}(\Sigma)$ to the closed surface $\Sigma$ and an element \(Z^{TV}(M_f)\in Z^{TV}(\partial M_f)=\Bbbk\) to the mapping torus \(M_f\). Turaev and Viro proved that $\mathrm{TV}(M_f)=Z^{TV}(M_f)$ and $Z^{TV}$ satisfies the TQFT axioms \cite{TURAEV1992}. By Atiyah’s TQFT axioms \cite{Atiyah1988}, the functor $Z^{TV}$ naturally induces a representation 
$$\rho^{TV}:\mathrm{Mod}(\Sigma)\rightarrow \mathrm{GL}(Z^{TV}(\Sigma)).$$
Then the following result is important in our proof,

\begin{lemma}[\cite{Atiyah1988}]
For an oriented closed surface $\Sigma$, and any mapping class $[f]\in \mathrm{Mod}(\Sigma)$,
\[
\mathrm{TV}(M_f) = \operatorname{tr}(\rho^{TV}([f])).
\]
\end{lemma}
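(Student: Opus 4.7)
The plan is to realize $M_f$ as the closure of the mapping cylinder of $f$ and then apply the gluing axiom of the TQFT to convert this closure into a trace. I would first set up the cobordism picture: the product cylinder $\Sigma\times[0,1]$, viewed as a cobordism from $\Sigma$ to $\Sigma$, is sent by $Z^{TV}$ to $\mathrm{id}_{Z^{TV}(\Sigma)}$, since by Atiyah's axioms the cylinder represents the identity morphism in the cobordism category. The \emph{mapping cylinder} $C_f$ is the same underlying cobordism but with the outgoing boundary identified with $\Sigma$ via $f$ instead of the identity. Under the representation $\rho^{TV}$, one obtains $Z^{TV}(C_f)=\rho^{TV}([f])\in\mathrm{End}(Z^{TV}(\Sigma))$; this is essentially the definition of $\rho^{TV}$ as a mapping class group representation.

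Next I would describe the mapping torus $M_f$ as the cobordism obtained from $C_f$ by identifying its two boundary components (with opposite orientations). Equivalently, $M_f$ is the cobordism from $\emptyset$ to $\emptyset$ obtained by composing $C_f$ (viewed as a morphism $\Sigma\to\Sigma$) with the pairing and copairing morphisms that close off the two ends. In Atiyah's formalism this is the standard "closing a cobordism to compute a trace" construction: if $X:A\to A$ is a morphism in a TQFT and $\hat X:\emptyset\to\emptyset$ is its closure, then $Z(\hat X)=\operatorname{tr}(Z(X))$. This gluing/trace identity is a direct consequence of the monoidal and duality structure encoded in the TQFT axioms (the evaluation composed with coevaluation yields the categorical trace).

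Applying this to $X=C_f$ and $\hat X=M_f$, we obtain
\[
Z^{TV}(M_f)=\operatorname{tr}(Z^{TV}(C_f))=\operatorname{tr}(\rho^{TV}([f])).
\]
Combined with the identification $\mathrm{TV}(M_f)=Z^{TV}(M_f)$ established by Turaev--Viro, this proves the claim.

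The main technical points to be careful about are twofold. First, one must verify that the mapping cylinder really does implement the mapping class action on $Z^{TV}(\Sigma)$; this is the content of the Dehn--Nielsen--Baer style statement at the TQFT level and follows once one checks that isotopic diffeomorphisms give equal cobordisms up to the relation imposed by the TQFT functor. Second, one must pin down the orientation conventions when gluing the two boundary components of $C_f$ so that the resulting closed manifold is indeed $M_f$ (and not, say, the mapping torus of $f^{-1}$); here the remark in Section 4 that $\mathrm{TV}$ does not detect orientation removes any ambiguity. I expect the bulk of the work to be bookkeeping of these conventions rather than any substantive obstacle, since the trace formula itself is a formal consequence of the TQFT axioms once the cobordism decomposition of $M_f$ is in place.
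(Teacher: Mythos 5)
Your proposal is correct and is precisely the standard argument the paper is invoking: the paper states this lemma without proof, citing Atiyah's TQFT axioms, and the intended justification is exactly your decomposition of $M_f$ as the self-gluing of the mapping cylinder $C_f$ together with the gluing axiom turning that closure into $\operatorname{tr}(Z^{TV}(C_f))=\operatorname{tr}(\rho^{TV}([f]))$. No discrepancy with the paper's approach.
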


The topological group $X$ with continuous homomorphisms $\phi_i$ are called \emph{compatible} with system $(X_i,\phi_{ij},I)$ if the following diagram is commutative:
\begin{displaymath}
\begin{tikzcd}
 X\ar[r,"\phi_i"]\ar[dr,"\phi_j"'] &{X_i}\ar[d,"\phi_{ij}"]\\
 &X_j
\end{tikzcd}
\end{displaymath} 

\begin{proposition}[the universal property, {\cite{Ribes2000}}]
The inverse limit system $(X_i,\phi_{ij},I)$ together with compatible continuous homomorphisms
$$\phi_i:{X}\rightarrow X_i$$
satisfies the \emph{universal property}:

\begin{displaymath}
\begin{tikzcd}
 Y\ar[r,"\psi"]\ar[dr,"\psi_i"'] &{X}\ar[d,"\phi_i"]\\
 &X_i
\end{tikzcd}
\end{displaymath} 
whenever $Y$ is a topological group and $\psi_i:Y\rightarrow X_i$ is a set of compactible continuous homomorphisms, then there is a unique continuous homomorphism $\psi:Y\rightarrow {X}$ such that $\phi_i\psi=\psi_i$ for all $i\in I$.
\end{proposition}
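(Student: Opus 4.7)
The plan is to work with the standard concrete realization of the inverse limit and then verify the universal property coordinate-by-coordinate. First I would realize $X = \varprojlim X_i$ as the subset of the product $\prod_{i\in I} X_i$, equipped with the subspace topology from the product topology, consisting of those tuples $(x_i)_{i\in I}$ satisfying the compatibility condition $\phi_{ij}(x_i)=x_j$ whenever $j\leq i$. Under this description the continuous homomorphisms $\phi_i:X\to X_i$ are simply the restrictions of the product projections, and compatibility $\phi_{ij}\circ\phi_i=\phi_j$ is immediate from the definition.

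Next, given a topological group $Y$ with compatible continuous homomorphisms $\psi_i:Y\to X_i$, I would define the candidate map $\psi:Y\to\prod_{i\in I} X_i$ by the assignment $y\mapsto (\psi_i(y))_{i\in I}$. The compatibility hypothesis $\phi_{ij}\circ\psi_i=\psi_j$ guarantees that the tuple $(\psi_i(y))_{i\in I}$ satisfies the defining condition of $X$, so $\psi$ factors through $X$. It is a group homomorphism because the group operation on the product is componentwise and each $\psi_i$ is a homomorphism, and it is continuous because a map into a product is continuous if and only if each of its components is continuous, which holds by hypothesis on the $\psi_i$. The equation $\phi_i\circ\psi=\psi_i$ holds by construction.

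Finally I would address uniqueness. Suppose $\psi':Y\to X$ is another continuous homomorphism satisfying $\phi_i\circ\psi'=\psi_i$ for every $i\in I$. Then for each $y\in Y$ and each $i\in I$ we have
\[
\phi_i(\psi'(y))=\psi_i(y)=\phi_i(\psi(y)),
\]
so the two elements $\psi(y),\psi'(y)\in X$ have the same image under every projection $\phi_i$. Since the family of projections $\{\phi_i\}_{i\in I}$ separates the points of $X$ (being the restriction of the coordinate projections on the ambient product), this forces $\psi(y)=\psi'(y)$ for all $y$, hence $\psi=\psi'$.

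The only subtlety worth flagging is the continuity of $\psi$, which relies on the product-topology characterization of maps into a product and on the fact that $X$ is topologized as a subspace of $\prod_{i\in I}X_i$; beyond this, the argument is purely formal and no property of the index set $I$ beyond its being directed (which is already encoded in the definition of the inverse system) is used.
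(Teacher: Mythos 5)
Your argument is correct and complete: realizing $\varprojlim X_i$ inside the product, checking that $(\psi_i(y))_i$ lands in the limit, invoking the universal property of the product topology for continuity, and using separation of points by the projections for uniqueness is exactly the standard proof, and it is the one given in the cited reference \cite{Ribes2000}; the paper itself states this proposition without proof. No gaps.
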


\begin{lemma}\label{mod}
 Let $R$ be a finitely generated $\mathbb{Z}$-algebra, $a,\ b\in R$. If $a\equiv b \ (mod \ I)$ for all finite index ideals $I$, then $a=b$ in $R$.
\end{lemma}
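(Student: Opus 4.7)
Setting $c = a - b$, the claim becomes the statement that a finitely generated $\mathbb{Z}$-algebra is residually finite as a ring: if $c$ lies in every finite-index ideal, then $c = 0$. I will treat the commutative case, which is the setting relevant to the paper (the ring generated over $\mathbb{Z}$ by the relevant matrix coefficients of $\rho^{TV}$). The strategy is to exhibit a large enough family of finite-index ideals to detect any nonzero element, by combining the Jacobson property of $R$ with Krull's intersection theorem.

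The first step is to observe that $R$, being a quotient of some $\mathbb{Z}[x_1,\ldots,x_n]$, is Noetherian and a Jacobson ring. The key input is a version of Hilbert's Nullstellensatz over $\mathbb{Z}$: for every maximal ideal $\mathfrak{m}\subset R$, the residue field $R/\mathfrak{m}$ is a \emph{finite} field. Given this, I would then argue that every power $\mathfrak{m}^k$ is a finite-index ideal. Indeed, $\mathfrak{m}$ is finitely generated by Noetherianness, so each graded piece $\mathfrak{m}^i/\mathfrak{m}^{i+1}$ is a finitely generated module over the finite field $R/\mathfrak{m}$, hence finite; a telescoping argument on the filtration then shows $R/\mathfrak{m}^k$ is finite for every $k \geq 1$.

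Now the hypothesis on $c$ says in particular that $c \in \mathfrak{m}^k$ for every maximal ideal $\mathfrak{m}$ and every $k$. Passing to the Noetherian local ring $R_\mathfrak{m}$, Krull's intersection theorem gives $\bigcap_k (\mathfrak{m} R_\mathfrak{m})^k = 0$, so the image of $c$ in $R_\mathfrak{m}$ is zero. Translating back, for each maximal $\mathfrak{m}$ there exists $s \in R \setminus \mathfrak{m}$ with $sc = 0$ in $R$. Therefore the annihilator ideal $\operatorname{Ann}_R(c)$ is not contained in any maximal ideal, forcing $\operatorname{Ann}_R(c) = R$ and hence $c = 0$, as desired.

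The delicate step is the Nullstellensatz input guaranteeing that residue fields at maximal ideals are finite; everything else is standard commutative algebra once this is in hand. A secondary subtlety, which I would mention but not dwell on, is the noncommutative reading of ``$\mathbb{Z}$-algebra'': in the intended application $R$ will be the commutative subalgebra of $\Bbbk$ generated over $\mathbb{Z}$ by the finitely many trace values needed in the proof of Theorem~\ref{main TV}, so the commutative argument above suffices.
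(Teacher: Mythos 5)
Your argument is correct, and it reaches the conclusion by a genuinely different route from the paper. The paper picks a nonzero $x=a-b$, uses Zorn's lemma to choose an ideal $I$ maximal with respect to not containing $x$, and then analyzes the structure of $R/I$ directly: it identifies $(I+(x))/I$ as the unique minimal nonzero ideal, produces a maximal ideal $M\supseteq I$ with $R/M\cong (I+(x))/I$, invokes Artin--Rees to show $(M/I)^{c+1}=0$, and concludes that $R/I$ is a finite local ring via the Bourbaki finiteness theorem for residue fields. You instead exhibit an explicit separating family of finite-index ideals, namely the powers $\mathfrak{m}^k$ of maximal ideals (finite index because $R/\mathfrak{m}$ is finite and each $\mathfrak{m}^i/\mathfrak{m}^{i+1}$ is a finite module over it), and then kill $c=a-b$ by localizing and applying Krull's intersection theorem, finishing with the observation that $\operatorname{Ann}_R(c)$ lies in no maximal ideal. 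The two proofs rest on the same two pillars --- the general Nullstellensatz (finiteness of residue fields of finitely generated $\mathbb{Z}$-algebras) and Artin--Rees (of which Krull intersection is the standard corollary) --- but yours is the classical ``residual finiteness of finitely generated commutative rings'' argument in the style of Mal'cev: it avoids the Zorn's lemma maximality trick and produces the separating ideals constructively, at the cost of routing through localization. Your restriction to the commutative case is harmless and matches the paper's implicit assumption, since in the application $\bar R$ is a subring of the field $\Bbbk$.
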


To prove this, we need the following two lemmas.

\begin{lemma}[Artin--Rees, {\cite[Corollary 10.10]{Ati}}]\label{Artin-Rees}
    Let $R$ be a Noetherian ring with an ideal $I$, $M$ be a finitely generated $R$-module with submodule $N$. Then there exists a natural number $c\in\mathbb{N}$, such that for any $n\in\mathbb{N}$, we have 
    \[ 
        I^{n+c}M\cap N=I^n(I^cM\cap N).
    \]
\end{lemma}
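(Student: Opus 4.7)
The plan is to prove this by the standard Rees algebra argument. I would first form the Rees algebra
\[
R^{\ast}=\bigoplus_{n\geq 0}I^{n},
\]
regarded as a graded subring of the polynomial ring $R[t]$ by identifying $I^{n}$ with $I^{n}t^{n}$. Since $R$ is Noetherian, the ideal $I$ is finitely generated, say by $a_{1},\dots,a_{r}$, so $R^{\ast}=R[a_{1}t,\dots,a_{r}t]$ is a finitely generated $R$-algebra and therefore Noetherian by the Hilbert basis theorem.

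Next I would attach to the filtration $M\supseteq IM\supseteq I^{2}M\supseteq\cdots$ the graded $R^{\ast}$-module
\[
M^{\ast}=\bigoplus_{n\geq 0}I^{n}M.
\]
Because $M$ is a finitely generated $R$-module, $M^{\ast}$ is generated in degree $0$ as an $R^{\ast}$-module and is therefore finitely generated. Consider the graded $R^{\ast}$-submodule
\[
N^{\ast}=\bigoplus_{n\geq 0}\bigl(N\cap I^{n}M\bigr)\ \subseteq\ M^{\ast};
\]
one checks that this is indeed an $R^{\ast}$-submodule because $I\cdot(N\cap I^{n}M)\subseteq N\cap I^{n+1}M$. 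Noetherianness of $R^{\ast}$ together with finite generation of $M^{\ast}$ forces $N^{\ast}$ to be a finitely generated $R^{\ast}$-module, so I may choose $c\in\mathbb{N}$ large enough that $N^{\ast}$ is generated by homogeneous elements of degrees $\leq c$.

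With this $c$, the final step is the graded-degree comparison. For any $n\geq 0$, expanding a degree $(n+c)$ element in terms of the chosen generators gives
\[
N\cap I^{n+c}M\;=\;\sum_{j=0}^{c}I^{n+c-j}\bigl(N\cap I^{j}M\bigr).
\]
Each summand with $j<c$ is contained in $I^{n}(N\cap I^{c}M)$ because $I^{c-j}(N\cap I^{j}M)\subseteq N\cap I^{c}M$, while the $j=c$ term is literally $I^{n}(N\cap I^{c}M)$. Conversely, $I^{n}(N\cap I^{c}M)\subseteq N\cap I^{n+c}M$ is immediate. Combining the two inclusions yields the desired equality $I^{n+c}M\cap N=I^{n}(I^{c}M\cap N)$.

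I expect no serious obstacle; the content is essentially packaged inside the right algebraic object. The one place requiring care is the verification that $N^{\ast}$ really is a graded $R^{\ast}$-submodule (so that Noetherianness applies to it) and the bookkeeping in the last display, where one has to recognize that generators in degrees $<c$ do not produce anything outside $I^{n}(N\cap I^{c}M)$. Everything else reduces to the Hilbert basis theorem and the definition of a finitely generated graded module.
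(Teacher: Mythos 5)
Your proof is correct and is the standard Rees-algebra argument; the paper does not prove this lemma itself but cites it to Atiyah--Macdonald, where the proof given is essentially the one you wrote (Noetherianness of $R^{\ast}=\bigoplus_{n}I^{n}$, finite generation of the graded submodule $\bigoplus_{n}(N\cap I^{n}M)$, and the degree-$\leq c$ generation argument). No gaps.
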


\begin{lemma}[{\cite[Chapter V, \S 3.4, Corollary 1]{Bou}}]\label{finite}
    Let $R$ be a finitely generated $\mathbb{Z}$-algebra that is a field, then $R$ is finite.
\end{lemma}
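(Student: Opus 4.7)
The plan is to case-split on the characteristic of $R$, which, being a field, is either $0$ or a positive prime $p$, and to reduce each case to the classical Zariski lemma (weak Nullstellensatz): a finitely generated algebra over a field that is itself a field is a finite algebraic extension of the base field. The task is then to bridge the gap between the $\mathbb{Z}$-algebra hypothesis of our lemma and the ``algebra over a field'' hypothesis needed to invoke Zariski.

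If $R$ has characteristic $p>0$, the structure map $\mathbb{Z}\to R$ factors as $\mathbb{Z}\twoheadrightarrow\mathbb{F}_p\hookrightarrow R$, so $R$ is a finitely generated $\mathbb{F}_p$-algebra with the same generators. Zariski's lemma gives that $R$ is a finite algebraic extension of $\mathbb{F}_p$, hence a finite set, and we are done.

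If instead $R$ has characteristic $0$, the plan is to derive a contradiction. Then $\mathbb{Z}\hookrightarrow R$ and, since $R$ is a field, $\mathbb{Q}\subseteq R$. Viewing $R$ as a finitely generated $\mathbb{Q}$-algebra with the same generators, Zariski's lemma gives that $R$ is a finite extension of $\mathbb{Q}$; by the primitive element theorem, write $R=\mathbb{Q}(\alpha)$. Each of the finitely many $\mathbb{Z}$-algebra generators $y_i$ of $R$ can be written in the form $N^{-1}f_i(\alpha)$ with $f_i\in\mathbb{Z}[x]$ and a single $N\in\mathbb{Z}_{>0}$ clearing every denominator that appears. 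Hence $R\subseteq \mathbb{Z}[\alpha][1/N]$. But $\mathbb{Q}\subseteq R$ would then require $1/p\in\mathbb{Z}[\alpha][1/N]$ for \emph{every} prime $p$, which must fail for any prime $p$ coprime to $N$: choose a prime ideal $\mathfrak{p}$ of $\mathbb{Z}[\alpha]$ above $(p)$; then $N$ is a unit modulo $\mathfrak{p}$ while $p\in\mathfrak{p}$, so an identity $N^k=p\cdot f(\alpha)$ is impossible mod $\mathfrak{p}$. This contradiction eliminates characteristic $0$.

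The main obstacle is the characteristic-zero step. The delicate point is making the inclusion $R\subseteq \mathbb{Z}[\alpha][1/N]$ rigorous using a \emph{single} integer $N$ that works for every generator, and then verifying cleanly that $\mathbb{Z}[\alpha][1/N]$ really cannot contain all of $\mathbb{Q}$. Both appeals to Zariski's lemma and the positive-characteristic case are standard commutative algebra and should cause no trouble.
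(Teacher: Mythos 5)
The paper does not prove this lemma at all: it is quoted directly from Bourbaki, so any genuine argument you supply is already going beyond the source. Your proof is the standard one (reduce to Zariski's lemma over the prime field, then kill characteristic zero by a denominator argument), and the positive-characteristic half is complete as written. In the characteristic-zero half the outline is right, but one step needs repair: you ``choose a prime ideal $\mathfrak{p}$ of $\mathbb{Z}[\alpha]$ above $(p)$,'' which silently assumes such a prime exists, i.e.\ that $p$ is not already a unit in $\mathbb{Z}[\alpha]$. If $\alpha$ is not an algebraic integer this can fail (for $\alpha=1/q$ one has $\mathbb{Z}[\alpha]=\mathbb{Z}[1/q]$ and nothing lies over $(q)$), and in general $\mathbb{Z}[\alpha]$ need not be finite over $\mathbb{Z}$, so ``lying over'' is not automatic. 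The fix is routine and should be stated: replace $\alpha$ by $a\alpha$ where $a$ is the leading coefficient of its minimal polynomial, so that $\alpha$ is integral over $\mathbb{Z}$ and $\mathbb{Z}[\alpha]$ is a finitely generated $\mathbb{Z}$-module; then $\mathbb{Z}[\alpha]/p\mathbb{Z}[\alpha]$ is a nonzero finite ring for every prime $p$, any maximal ideal of it pulls back to the desired $\mathfrak{p}$ with $\mathbb{Z}[\alpha]/\mathfrak{p}$ a finite field of characteristic $p$, and your reduction of the identity $N^{k}=p\,g(\alpha)$ modulo $\mathfrak{p}$ (with $p\nmid N$, so $N\notin\mathfrak{p}$) gives the contradiction exactly as you intend. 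With that one adjustment the proof is correct, and it is a reasonable self-contained substitute for the Bourbaki citation.
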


\begin{proof}[Proof of Lemma {\ref{mod}}.]
    We need to show that if $a-b\in I$ for any finite index ideal $I$, then $a-b=0$. It suffices to prove that for any nonzero element $x\in R$, there exists a finite index ideal $I$ that does not contain $x$. Using Zorn’s lemma, we can consider the ideal $I$ that is maximal subject to the condition that it does not contain $x$. Our goal is to show that $R/I$ is finite.
    
    Note that every nonzero ideal in $R/I$ contains $x$. So $J/I:=I+(x)/I$ is the unique minimal nonzero ideal of $R/I$. Since $J/I$ only has trivial submodules, it is an irreducible $R$-module. Consider the $R$-module homomorphism 
    $$
    \varphi:R\to J/I,\quad r\mapsto rx.
    $$
    Then $\varphi$ is surjective (otherwise its image is a nontrivial submodule of $J/I$) and its kernel is a maximal ideal $M$ (otherwise $\varphi(N/M)$ is a nontrivial submodule of $J/I$ for some maximal ideal $N$ containing $M$) containing $I$. Hence $\varphi$ induces an isomorphism $R/M\simeq J/I$ and $MJ/I=0$. 

    Since $R$ is a finitely generated $\mathbb{Z}$-algebra, we can apply Lemma \ref{Artin-Rees} to $R$ with maximal ideal $M$, and the finitely generated $R$-module $R/I$ with submodule $J/I$, then there exists a natural number $c\in\mathbb{N}$ such that $((M^{c+1}+I)\cap J)/I=M((M^c+I)\cap J)/I$. Note that $MJ/I=0$ implies that $(M^{c+1}+I)\cap J\subset I$. If $M^{c+1}+I\not\subset I$, we have $x\in M^{c+1}+I$ and therefore $J\subset(M^{c+1}+I)\cap J\subset I$, a contradiction. So $(M/I)^{c+1}=(M^{c+1}+I)/I=0$. This implies that $M/I$ is contained in the nilradical ideal of $R/I$ and therefore the Jacobson radical ideal of $R/I$. Thus, $R/I$ is a local ring.

    Moreover, since $R/M$ is a finitely generated $\mathbb{Z}$-algebra that is a field, Lemma \ref{finite} implies that $R/M$ is a finite field. Therefore, $(M/I)^{c+1}=0$ implies that $R/I$ is finite.
\end{proof}

\begin{proof}[Proof of Theorem \ref{main TV}]

Let $\Sigma$ be a connected closed orientable surface of genus $g$ and let $\Bbbk$ be a field. Let $Z(\Sigma)$ be the assigned vector space over $\Bbbk$. From \cite[section 9.5]{TURAEV1992}, we know that the vector space $Z(\Sigma)$ is $\Bbbk[H_1(\Sigma;\mathbb{Z}_2)]$, freely generated by elements of $H_1(\Sigma;\mathbb{Z}_2)$ over $\Bbbk$. Note that $H_1(\Sigma;\mathbb{Z}_2)=\mathbb{Z}_2^{2g}$ for $\Sigma$ of genus $g$, we have $Z(\Sigma)=\Bbbk^n,\ n=2^{2g}$. Then the linear representation  is denoted as $\rho:\operatorname{Mod}(\Sigma)\rightarrow \mathrm{GL}(n,\Bbbk)$. 

Wajnryb \cite{WAJNRYB1996} proved that the mapping class group of a closed oriented surface is generated by two elements. Suppose $\operatorname{Mod}(\Sigma)$ is generated by $x_1,x_2$ over $\mathbb{Z}$. Let $\bar{R}$ be  the subring of $\Bbbk$ generated by all the entries of the matrices $\rho(x_i)^{\pm 1},\ i=1,2$.

Hence the linear representation is finally denoted as $$\rho:\operatorname{Mod}(\Sigma)\rightarrow \mathrm{GL}(n,\bar{R})$$ for the ring $\bar{R}$ defined above.

Note that $\mathrm{GL}(n,\bar{R}/I)$ are finite groups for every finite index ideal $I$ of $\bar{R}$. Define $\rho_I=p_I\circ \rho$, where $p_I$ is the projection$$\rho_I:\operatorname{Mod}(\Sigma) \stackrel{\rho}{\longrightarrow}\mathrm{GL}(n,\bar{R})\stackrel{p_I}{\longrightarrow} \mathrm{GL}(n,\bar{R}/I).$$
Then for the universal property, $\rho_I$ factors through $\widehat{\operatorname{Mod}(\Sigma)}$, i.e.there is a unique continuous map such that $\rho_I=\bar{\rho}\circ \eta$.
\begin{displaymath}
\begin{tikzcd}
\operatorname{Mod}(\Sigma)\ar[dr,"\eta"']\ar[rr,"\rho_I"] && \mathrm{GL}(n,\bar{R}/I)\\
 &\widehat{\operatorname{Mod}(\Sigma)}\ar[ur,"\bar{\rho}"']&
\end{tikzcd}
\end{displaymath}

Since $f$ and $g\in \operatorname{Mod}(\Sigma)$ are conjugate in the profinite completions $\widehat{\operatorname{Mod}(\Sigma)}$ of mapping class group, there is a diffeomorphism $T\in \operatorname{Mod}(\Sigma)$ such that $\eta([f])=\eta([T])\eta([g])\eta([T])^{-1}$. 

Therefore,
\begin{align*}
\mathrm{tr}(\rho_I[f])&=\mathrm{tr}(\bar{\rho}(\eta([f]))\\
&=\mathrm{tr}(\bar{\rho}\eta([T])\cdot\bar{\rho}\eta([g])\cdot\bar{\rho}\eta([T])^{-1})\\
&=\mathrm{tr}(\rho_I[g])
\end{align*} for all finite index ideal $I$ of $\bar{R}$.

Moreover, $\mathrm{tr}(\rho_I([f]))=\mathrm{tr}(\rho([f]))\ mod\ I$ for every $f\in \operatorname{Mod}(\Sigma)$.
Since the ring $\bar{R}$ is a finitely generated $\mathbb{Z}$-algebra, Lemma \ref{mod} implies the conclusion $\mathrm{TV}(M_f)=\mathrm{TV}(M_g)$. This completes the proof of Theorem \ref{main TV}.

\end{proof}

\section{The regularity}
In this section, we exhibit a class of closed 3-manifolds that satisfies the regularity condition. For finite-volume hyperbolic 3–manifold groups, one might expect that all profinite isomorphisms are regular, but that conjecture remains the closed case hard to approach. Liu \cite{Liu2020} showed that all profinite isomorphisms are $\widehat{\mathbb{Z}}^{\times}$-regular in the finite volume hyperbolic case including closed manifolds. Xu \cite{xuregularity} proved that all profinite isomorphisms are regular in the cusped case. By performing Dehn filling on such cusped manifolds, we construct examples of closed manifolds that also satisfy the regularity condition.

Let $M$ be a finite volume hyperbolic 3-manifold with $n>0$ cusps denoted as $\partial_1 M,\dots,\partial_n M$. For each $\partial_i M\cong T^2$, $c_i$ is a slope on $\partial_i M$ which is allowed to be empty. We obtain a closed manifold $M_c=M_{c_1,\dots,c_n}$ by performing Dehn filling along the slopes $c_i$ on each boundary torus.

\begin{theorem}[{\cite[Theorem A]{xuwitnessed}}]\label{Xu}
Suppose $M,N$ are finite volume cusped hyperbolic 3-manifolds and $f:\widehat{\pi_1(M)}\rightarrow \widehat{\pi_1(N)}$ is an isomorphism. Then there is a homeomorphism $\Psi:\partial M\rightarrow \partial N$ and an isomorphism $\bar{f}:\widehat{\pi_1(M_c)}\rightarrow \widehat{\pi_1(N_{\Psi(c)})}$ such that the following diagram commutes
\begin{displaymath}
\begin{tikzcd}
\widehat{\pi_1(M)}\ar[r,"\widehat{p}"]\ar[d,"f"] &\widehat{\pi_1(M_c)}\ar[d,"\bar{f}"]\\
 \widehat{\pi_1(N)}\ar[r,"\widehat{q}"]&\widehat{\pi_1(N_{\Psi(c)})}
\end{tikzcd}
\end{displaymath}
where $\widehat{p},\widehat{q}$ are induced by inclusions.
\end{theorem}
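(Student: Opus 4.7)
The plan is to proceed in three steps: first, use $f$ to match up the peripheral subgroups of $\pi_1(M)$ and $\pi_1(N)$; second, upgrade this matching to an actual homeomorphism $\Psi$ of boundary tori; third, descend $f$ to the Dehn-filled quotients. For the first step, I would exploit the fact that the peripheral subgroups of a finite-volume cusped hyperbolic 3-manifold admit a profinite-group-theoretic characterization inside $\widehat{\pi_1(M)}$, for instance as (conjugacy classes of) maximal closed subgroups isomorphic to $\widehat{\mathbb{Z}}^{2}$ satisfying an appropriate malnormality condition coming from the cusp decomposition. This would force $f$ to permute these conjugacy classes via some bijection $\sigma$ of the cusps and to induce isomorphisms $f_i \colon \widehat{\pi_1(\partial_i M)} \to \widehat{\pi_1(\partial_{\sigma(i)} N)}$, well-defined up to conjugation.

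For the second step, I would show that each $f_i$, a priori an element of $\mathrm{GL}(2,\widehat{\mathbb{Z}})$, in fact lies in $\mathrm{GL}(2,\mathbb{Z})$. Liu's $\widehat{\mathbb{Z}}^{\times}$-regularity theorem \cite{Liu2020} in the cusped case reduces the ambiguity at each peripheral component to multiplication by a global unit $\lambda \in \widehat{\mathbb{Z}}^{\times}$; the compatibility of the peripheral inclusions with the abelianization map $\widehat{H_1(\partial M)} \to \widehat{H_1(M)}$, combined with the half-lives-half-dies consequence of Poincar\'e--Lefschetz duality for compact $3$-manifolds with torus boundary, should pin this scalar down to $\pm 1$. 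Each $f_i$ is then realized by an orientation-compatible homeomorphism $\psi_i \colon \partial_i M \to \partial_{\sigma(i)} N$, and the $\psi_i$'s assemble into the desired $\Psi$.

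For the third step, Dehn filling along $c_i$ corresponds at the level of profinite fundamental groups to quotienting by the closed normal subgroup generated by $c_i$. Since $f$ restricts on each cusp to $(\psi_i)_{\ast}$, it sends the closed normal subgroup generated by the slopes $c$ to the closed normal subgroup generated by $\Psi(c)$, and therefore descends to a continuous homomorphism $\bar{f}\colon \widehat{\pi_1(M_c)} \to \widehat{\pi_1(N_{\Psi(c)})}$; applying the same construction to $f^{-1}$ exhibits an inverse, and commutativity of the square is immediate from the construction.

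The main obstacle is the second step: promoting the peripheral profinite maps $f_i$ from $\mathrm{GL}(2,\widehat{\mathbb{Z}})$ to genuine $\mathrm{GL}(2,\mathbb{Z})$-maps of boundary tori. This is precisely where the hypothesis that $M$ is cusped enters critically, since both Liu's regularity theorem and the duality argument controlling the global scalar genuinely use the presence of boundary; the closed-case analogue of this step is the reason the full regularity conjecture remains open for closed hyperbolic 3-manifolds.
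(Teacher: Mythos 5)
First, note that the paper does not prove this statement at all: it is imported verbatim as Theorem A of \cite{xuwitnessed}, so there is no in-paper argument to compare yours against. Judged on its own terms, your three-step skeleton (profinite detection of the peripheral structure, realization of the peripheral maps by a homeomorphism of boundary tori, descent to the Dehn-filled quotients) is the standard architecture for results of this kind and is almost certainly the shape of Xu's actual proof. Steps 1 and 3 are sound modulo the (nontrivial) imported inputs: step 1 is Wilton--Zalesskii's theorem that profinite isomorphisms of cusped hyperbolic $3$-manifold groups respect the peripheral structure, together with separability of peripheral subgroups so that $\overline{\pi_1(\partial_i M)}\cong\widehat{\mathbb{Z}}^2$; step 3 correctly uses that $\widehat{G/\langle\langle c\rangle\rangle}\cong\widehat{G}/\overline{\langle\langle c\rangle\rangle}$ and that normal closures are insensitive to the conjugation ambiguity in the peripheral identifications. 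Your proof is thus a blueprint resting on several deep cited theorems rather than a self-contained argument, which is acceptable given that the statement itself is a citation.

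The one substantive problem is in your step 2, where you try to pin the global unit $\lambda\in\widehat{\mathbb{Z}}^{\times}$ from Liu's theorem \cite{Liu2020} down to $\pm1$ via a half-lives-half-dies argument. Two remarks. First, this is unnecessary for the statement at hand: Liu already gives $f_i=\lambda A_i$ with $A_i\in\mathrm{GL}(2,\mathbb{Z})$, and since $\lambda$ is a unit of $\widehat{\mathbb{Z}}$ the closed procyclic subgroup $\overline{\langle c_i\rangle}$ is carried to $\overline{\langle A_i c_i\rangle}$ regardless of $\lambda$; the homeomorphism $\Psi$ is built from the $A_i$ alone, and the normal closures match without any control on $\lambda$. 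Second, the sketch you give does not establish $\lambda=\pm1$: determining that unit is essentially the content of Xu's regularity theorem \cite{xuregularity}, which this paper treats as a separate and harder result (and which is exactly what fails to be known in the closed case). You should therefore excise the $\lambda=\pm1$ claim from the proof of this theorem, or else acknowledge that it is a black-boxed theorem rather than a consequence of Poincar\'e--Lefschetz duality alone. With that repair, the proposal is a correct reduction of the statement to the cited results of Wilton--Zalesskii and Liu.
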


\begin{theorem}
Suppose $M,N$ are finite volume oriented cusped hyperbolic 3-manifolds. The isomorphism between the profinite completion of fundamental groups of the closed 3-manifolds $\bar{f}:\widehat{\pi_1(M_c)}\rightarrow \widehat{\pi_1(N_{\Psi(c)})}$ induced by isomorphism $f:\widehat{\pi_1(M)}\rightarrow \widehat{\pi_1(N)}$ is regular.
\end{theorem}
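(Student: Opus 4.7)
The plan is to transport Xu's regularity statement for the cusped case (Theorem \ref{Xu} combined with the results of \cite{xuregularity}) across the commutative square already supplied by Theorem \ref{Xu}. Applying the abelianization functor and using that for finitely generated groups the profinite completion commutes with abelianization, the square becomes
\begin{displaymath}
\begin{tikzcd}
\widehat{H_1(M;\mathbb{Z})}\ar[r,"\widehat{p^{ab}}"]\ar[d,"\widehat{f^{ab}}"] & \widehat{H_1(M_c;\mathbb{Z})}\ar[d,"\widehat{\bar f^{ab}}"]\\
\widehat{H_1(N;\mathbb{Z})}\ar[r,"\widehat{q^{ab}}"] & \widehat{H_1(N_{\Psi(c)};\mathbb{Z})}
\end{tikzcd}
\end{displaymath}
where the horizontal arrows are the profinite completions of the Dehn filling surjections. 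To prove regularity of $\bar f$, I need to produce an honest isomorphism $\bar\alpha: H_1(M_c;\mathbb{Z})\to H_1(N_{\Psi(c)};\mathbb{Z})$ inducing the right vertical map.

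The first step is to invoke Xu's theorem in the cusped case to obtain an isomorphism $\alpha:H_1(M;\mathbb{Z})\to H_1(N;\mathbb{Z})$ with $\widehat{\alpha}=\widehat{f^{ab}}$. The second step identifies the kernels of the filling maps: by the standard effect of Dehn filling on homology, $\ker p^{ab}=K_M:=\langle [c_1],\ldots,[c_n]\rangle$ and $\ker q^{ab}=K_N:=\langle [\Psi(c_1)],\ldots,[\Psi(c_n)]\rangle$. The crux is to show $\alpha(K_M)=K_N$. For the inclusion $\alpha(K_M)\subseteq K_N$, take $x\in K_M$; by commutativity of the profinite square, $\widehat{f^{ab}}(x)=\alpha(x)$ lies in $\ker\widehat{q^{ab}}$, which is the closure $\overline{K_N}$ of $K_N$ in $\widehat{H_1(N;\mathbb{Z})}$. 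Because $H_1(N;\mathbb{Z})$ is finitely generated abelian, every subgroup is closed in the profinite topology, so $\overline{K_N}\cap H_1(N;\mathbb{Z})=K_N$, and hence $\alpha(x)\in K_N$. Applying the same reasoning to the inverse profinite isomorphism (also regular by Xu) yields the opposite inclusion, so $\alpha(K_M)=K_N$.

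Once $\alpha(K_M)=K_N$ is in hand, $\alpha$ descends to an isomorphism $\bar\alpha:H_1(M_c;\mathbb{Z})\to H_1(N_{\Psi(c)};\mathbb{Z})$ by passage to quotients, and the universal property of profinite completion together with the commutativity of the original square (and surjectivity of $\widehat{p^{ab}}$) forces $\widehat{\bar\alpha}=\widehat{\bar f^{ab}}$. The main obstacle is genuinely the compatibility step $\alpha(K_M)=K_N$; however, it dissolves once one observes that the peripheral compatibility encoded in $\Psi$ is already built into Theorem \ref{Xu}, and that finitely generated abelian groups have all subgroups closed in their profinite topology. No further hypotheses on the slopes $c_i$ are needed, so the result holds for every Dehn filling to which Theorem \ref{Xu} applies.
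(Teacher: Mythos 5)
Your argument is correct, but it takes a genuinely different route from the paper's. The paper's proof introduces the \emph{profinite mapping degree} on $H_3$, quotes Xu's computation $\deg(\bar f)=\deg(f)=\pm 1$, and deduces regularity from the degree being $\pm1$ (implicitly via the link between the degree and the unit appearing in Liu's $\widehat{\mathbb{Z}}^{\times}$-regularity). You instead start from Xu's cusped regularity theorem to obtain an honest isomorphism $\alpha\colon H_1(M;\mathbb{Z})\to H_1(N;\mathbb{Z})$ inducing $\widehat{f^{ab}}$, and push it through the Dehn-filling quotient: the kernels of the filling maps on $H_1$ are generated by the filled slopes, profinite completion is exact on finitely generated abelian groups (flatness of $\widehat{\mathbb{Z}}$), and subgroup separability of finitely generated abelian groups gives $\overline{K_N}\cap H_1(N;\mathbb{Z})=K_N$, whence $\alpha(K_M)=K_N$ and $\alpha$ descends to the required isomorphism $\bar\alpha$ with $\widehat{\bar\alpha}=\widehat{\bar f^{ab}}$ by surjectivity of $\widehat{p^{ab}}$. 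Your route is more elementary and self-contained: it uses only the commutative square of Theorem \ref{Xu} plus Xu's cusped regularity, and it exhibits explicitly where the integral isomorphism on $H_1(M_c;\mathbb{Z})$ comes from, something the degree argument leaves implicit (a degree of $\pm1$ on $H_3$ does not by itself manufacture an integral isomorphism on $H_1$ without the surrounding machinery). The paper's approach buys brevity and an invariant formulation that composes well. One small point to make explicit in your write-up: for the reverse inclusion you should note that $f^{-1}$ is induced by $\alpha^{-1}$, which follows because $\widehat{\alpha^{-1}}=\widehat{\alpha}^{-1}$ and $H_1(N;\mathbb{Z})$ injects into its profinite completion.
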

\begin{proof}
Let $[M]\in H_3(M,\partial M;\mathbb{Z})$ and $[N]\in H_3(N,\partial N;\mathbb{Z})$ be the fundamental class. The \emph{profinte mapping degree} $deg(f)$ is defined as
\begin{align*}
f_*:\widehat{\mathbb{Z}}\otimes H_3(M,\partial M;\mathbb{Z})&\rightarrow \widehat{\mathbb{Z}}\otimes H_3(N,\partial N;\mathbb{Z})\\
1\otimes [M]&\mapsto deg(f)\otimes [N].
\end{align*}
By Xu's result in \cite{xuregularity}, $deg(\bar{f})=deg(f)=\pm 1$, then $\bar{f}$ is  a regular isomorphism.
\end{proof}

Therefore, two closed surface bundles over the circle obtained by the Dehn filling in Theorem \ref{Xu} satisfying the regularity condition.

\bibliographystyle{plain}
\bibliography{refs}

\begin{thebibliography}{10}

\bibitem{Atiyah1988}
M.~Atiyah.
\newblock Topological quantum field theories.
\newblock {\em Publications Math{\'e}matiques de l'Institut des Hautes
  {\'E}tudes Scientifiques}, 68(1):175--186, 1988.

\bibitem{Ati}
M.~F. Atiyah and I.~G. Macdonald.
\newblock {\em Introduction to commutative algebra}.
\newblock CRC Press, Boca Raton, 1969.

\bibitem{Barrett1993}
J.~W. Barrett and B.~W. Westbury.
\newblock Invariants of piecewise-linear 3-manifolds.
\newblock {\em Transactions of the American Mathematical Society},
  348:3997--4022, 1993.

\bibitem{bams}
H.~Bass, M.~Lazard, and J-P. Serre.
\newblock {Sous-groupes d'indice fini dans $SL\left( {n,Z} \right)$}.
\newblock {\em Bulletin of the American Mathematical Society}, 70(3):385 --
  392, 1964.

\bibitem{BF}
M.~Boileau and S.~Friedl.
\newblock {\em Profinite completions and 3-manifold groups}.
\newblock Survey article, 2015.

\bibitem{Bou}
N.~Bourbaki.
\newblock {\em Commutative algebra}.
\newblock Springer-Verlag, Berlin, Berlin, 1998.
\newblock Translated from the French, Reprint of the 1989 English translation.

\bibitem{Dijkgraaf1990}
R.~Dijkgraaf and E.~Witten.
\newblock Topological gauge theories and group cohomology.
\newblock {\em Communications in Mathematical Physics}, 129(2):393--429, 1990.

\bibitem{Freed1991}
D.~S. Freed and F.~Quinn.
\newblock Chern-simons theory with finite gauge group.
\newblock {\em Communications in Mathematical Physics}, 156:435--472, 1991.

\bibitem{Funar2013}
L.~Funar.
\newblock Torus bundles not distinguished by tqft invariants.
\newblock {\em Geometry \& Topology}, 17:2289--2344, 2011.

\bibitem{Hempel87}
J.~Hempel.
\newblock {\em Residual finiteness for 3-manifolds}, volume 111, pages
  379--396.
\newblock Princeton University Press, Princeton, 1987.

\bibitem{Liu2020}
Y.~Liu.
\newblock Finite-volume hyperbolic 3-manifolds are almost determined by their
  finite quotient groups.
\newblock {\em Inventiones mathematicae}, 231:741--804, 2020.

\bibitem{YiLiu2023}
Y.~Liu.
\newblock {Mapping classes are almost determined by their finite quotient
  actions}.
\newblock {\em Duke Mathematical Journal}, 172(3):569 -- 631, 2023.

\bibitem{Liu2023}
Y.~Liu.
\newblock On hempel pairs and turaev-viro invariants.
\newblock {\em Chinese Annals of Mathematics, Series B}, 2023.

\bibitem{M}
J.~L. Mennicke.
\newblock Finite factor groups of the unimodular group.
\newblock {\em Annals of Mathematics}, 81(1):31--37, 1965.

\bibitem{Nikolov2006}
N.~M. Nikolov and D.~Segal.
\newblock On finitely generated profinite groups, i: strong completeness and
  uniform bounds.
\newblock {\em Annals of Mathematics}, 165:171--238, 2006.

\bibitem{Reid2018}
A.~Reid.
\newblock Profinite rigidity.
\newblock In {\em Proceedings of the International Congress of Mathematicians},
  volume~1, pages 1191--1214, Rio de Janeiro, 2018.

\bibitem{Ribes2000}
L.~Ribes and P.~Zalesskii.
\newblock {\em Profinite Groups}, pages 19--77.
\newblock Springer Berlin Heidelberg, Berlin, Heidelberg, 2000.

\bibitem{Sokolov1997}
M.~V. Sokolov.
\newblock Which lens spaces are distinguished by turaev-viro invariants.
\newblock {\em Mathematical Notes}, 61(3):384--387, 1997.

\bibitem{TURAEV1992}
V.~G. Turaev and O.~Y. Viro.
\newblock State sum invariants of 3-manifolds and quantum 6j-symbols.
\newblock {\em Topology}, 31(4):865--902, 1992.

\bibitem{Ueki2017}
J.~Ueki.
\newblock The profinite completions of knot groups determine the alexander
  polynomials.
\newblock {\em Algebraic \& Geometric Topology}, 18(5):3013–3030, 2018.

\bibitem{WAJNRYB1996}
B.~Wajnryb.
\newblock Mapping class group of a surface is generated by two elements.
\newblock {\em Topology}, 35(2):377--383, 1996.

\bibitem{Wilkes2018}
G.~Wilkes.
\newblock Profinite rigidity of graph manifolds, ii: knots and mapping classes.
\newblock {\em Israel Journal of Mathematics}, 233:351 -- 378, 2018.

\bibitem{xuregularity}
X.~Xu.
\newblock On regularity of profinite isomorphisms between cusped hyperbolic
  3-manifolds and the $a$-polynomial.
\newblock \url{https://arxiv.org/abs/2506.21105}, 2025.

\bibitem{xuwitnessed}
X.~Xu.
\newblock Profinite rigidity witnessed by dehn fillings of cusped hyperbolic
  3-manifolds.
\newblock \url{https://arxiv.org/abs/2412.05229}, 2025.

\end{thebibliography}
\end{document}